\newtheorem{theorem}{Theorem}
\newtheorem{lemma}{Lemma}
\begin{document}

\begin{center}\large
\Large\textbf{On the Interrelation between Dependence Coefficients of Extreme Value
Copulas}\\
\medskip
Alexey V. Lebedev\footnote{Department of Probability Theory, Faculty of Mechanics and
Mathematics, Lomonosov Moscow State University, Moscow 119991, Russia. E-mail:
\texttt{avlebed@yandex.ru}. ORCID: 0000-0002-9258-0588.}
\end{center}

\begin{abstract}
For extreme value copulas with a known upper tail dependence coefficient we find
pointwise upper and lower bounds, which are used to establish upper and lower bounds
of the Spearman and Kendall correlation coefficients. We shown that in all cases the
lower bounds are attained on Marshall--Olkin copulas, and the upper ones, on copulas
with piecewise linear dependence functions.

\emph{Keywords\/}: extreme value copulas, upper tail dependence coefficient,
Spearman's correlation coefficient, Kendall's correlation coefficient

\emph{MSC:\/} 60E15, 60G70, 62G32, 62H20
\end{abstract}

\section{Introduction}

For a long time, to describe dependence of random variables, linear Gaussian models
were mainly used.

However, by the end of XX century there appeared common understanding that such
models are not good enough to well describe many natural, engineering, and social
phenomena. Therefore, copulas have become quite popular in the last decades. Their
various applications and theoretical studies mutually motivate each other.

A \emph{copula\/} $C$ is a multivariate distribution function on $[0,1]^d$, $d\ge
2$, such that all univariate marginal distributions are uniform on $[0,1]$. According
to Sklar's famous theorem, any multivariate function in $\mathbb{R}^d$ can be
represented as
$$
F(x_1,\dots x_d)=C(F_1(x_1),\dots F_d(x_d)),
$$
where $F_i$, $1\le i\le d$, are marginal distribution functions. Thus, to each
multivariate distribution there corresponds its copula. If the marginal distribution
functions are continuous, then such a representation is unique.

As an excellent textbook on copulas, we recommend \cite{Nel}.

Below we only consider bivariate copula $C(u,v)$ of random vectors $(X,Y)$ with
continuous distribution functions $F_X$ and $F_Y$ of the components, so that for the
joint distribution function of $X$ and $Y$ there exists a unique representation
$$
F(x,y)=C(F_X(x),F_Y(y)).
$$

A \emph{survival copula\/} ${\hat C}$ is most simply defined as a copula of the
random vector $(-X,-Y)$. It is related with the original copula by
$$
{\hat C}(u,v)=u+v-1+C(1-u,1-v).
$$
The survival copula relates distribution tails instead of the original functions:
$$
{\bar F}(x,y)={\bf P}(X>x,Y>y)={\hat C}({\bar F}_X(x),{\bar F}_Y(y)).
$$

Consider a classical example. Let there be a two-component system with two
independent factors that may cause failure of one of the components each. Let there
also be a third factor causing failure of both components simultaneously. Assuming
that all the factors act with a constant intensity, there occurs a bivariate
exponential Marshall--Olkin distribution \cite[Section 3.1.1]{Nel}. The survival
copula of this distribution is accordingly referred to as a Marshall--Olkin copula
and can be represented as
\begin{equation}\label{mo}
C(u,v)=\min\{u^{1-\alpha}v,uv^{1-\beta}\},\quad 0\le\alpha,\beta\le 1.
\end{equation}
This copula plays a major role in our study, and we will refer to it repeatedly.

There are several dependence coefficients related to copulas. We will need the
following ones:

1. \emph{Spearman's correlation coefficient\/} $\rho_S$ is defined as standard
(Pearson's) correlation coefficient of the random variables $U=F_X(X)$ and
$V=F_Y(Y)$. Taking into account their uniform distribution on $[0,1]$, we have
$$
\rho_S=12{\bf E}UV-3.
$$

2. \emph{Kendall's correlation coefficient\/} $\tau_K$ is defined as
$$
\tau_K={\bf E}{\rm\ sign}(X_1-X_2)(Y_1-Y_2),
$$
where $(X_1,Y_1)$ and $(X_2,Y_2)$ are independent random vectors distributed as
$(X,Y)$.

3. \emph{Upper tail dependence coefficient\/} $\lambda_U$ is defined as
$$
\lambda_U=\lim_{t\to 1-0}{\bf P}(X>F^{-1}_X(t)\,|\,Y>F^{-1}_Y(t)).
$$

In what follows, instead of $\rho_S$, $\tau_K$, and $\lambda_U$ we will write
$\rho$, $\tau$, and $\lambda$.

All these coefficients can be uniquely expressed through copulas and do not depend on
marginal distributions of the random variables:
\begin{gather*}
\rho=12\int_0^1\int_0^1C(u,v)\,du\,dv-3,\qquad
\tau=4\int_0^1\int_0^1C(u,v)\,dC(u,v)-1,\\ \lambda=2-\lim_{t\to
1-0}\frac{1-C(t,t)}{1-t}.
\end{gather*}

In particular, for the Marshall--Olkin copula we have
\begin{equation}\label{mo3}
\rho=\frac{3\alpha\beta}{2\alpha-\alpha\beta+2\beta},\qquad
\tau=\frac{\alpha\beta}{\alpha-\alpha\beta+\beta},\qquad
\lambda=\min\{\alpha,\beta\}.
\end{equation}

\emph{Extreme value copulas\/} are copulas of multivariate extreme value
distributions. If we take the componentwise maximum of several i.i.d.\ random
variables with this distribution, its distribution will be of the same type (up to
shift-scale transformations of the components). The same distributions appear as
limiting ones in the maxima scheme of i.i.d.\ random vectors (under linear
normalization). Respectively, in the minima scheme, survival copulas appear to be
extreme value copulas.

A necessary and sufficient condition for a copula to be an extreme value copula is
the identity
$$
C(u^s,v^s)=C^s(u,v),\quad \forall s>0.
$$

Among the copulas mentioned above, the class of extreme value copulas comprises the
Marshall--Olkin copula \eqref{mo} and the Gumbel copula
\begin{equation}\label{gum}
C(u,v)=\exp\left\{-\left((-\ln u)^\theta+(-\ln
v)^\theta\right)^{1/\theta}\right\},\quad \theta\ge 1.
\end{equation}

For the Gumbel copula we have
\begin{equation}\label{gumpar}
\tau=1-\frac{1}{\theta},\qquad \lambda=2-2^{1/\theta},
\end{equation}
and $\rho$, unfortunately, cannot be expressed explicitly.

Extreme value copulas have Pickands' representation\footnote{Here we follow the
definition from \cite[p.~98]{Nel}, but many other sources, for instance,
\cite[p.~312]{QRM}, use a mirror symmetric representation (for our purposes, this is
not significant):
$$
C(u,v)=\exp\left\{(\ln u+\ln v)A\left(\frac{\ln u}{\ln u+\ln v}\right)\right\},\quad
A(t)=-\ln C(e^{-t},e^{-(1-t)}).
$$}:
\begin{equation}\label{pik}
C(u,v)=\exp\left\{(\ln u+\ln v)A\left(\frac{\ln v}{\ln u+\ln v}\right)\right\},\quad
A(t)=-\ln C(e^{-(1-t)},e^{-t}),
\end{equation}
where the \emph{dependence function} $A(t)$ on $[0,1]$ is convex and satisfies the
inequality
$$
\max\{t,1-t\}\le A(t)\le 1.
$$

For example, for the Marshall--Olkin copula, from \eqref{mo} and \eqref{pik} we
obtain
\begin{equation}\label{mo2}
A(t)=1-\min\{\beta t, \alpha (1-t)\}.
\end{equation}

In the class of extreme value copulas, we have the following expressions for the
dependence coefficients introduced above:
\begin{equation}\label{vyr}
\rho=12\int_0^1\frac{dt}{(A(t)+1)^2}-3,\qquad
\tau=\int_0^1\frac{t(1-t)dA'(t)}{A(t)},\qquad \lambda=2(1-A(1/2)).
\end{equation}

Furthermore, for an extreme value copula, the upper tail dependence coefficient
uniquely determines its behavior on the main diagonal, namely
$$
C(u,u)=u^{2-\lambda},\quad 0\le u\le 1.
$$

One of the most important and interesting problems in copula theory is establishing
interrelations between various dependence coefficients. The classical domain of
possible values of $\rho$ and $\tau$ is given by
$$
\begin{array}{c} \displaystyle
\frac{3\tau-1}{2}\le\rho\le\frac{1+2\tau-\tau^2}{2},\quad \tau\ge 0,\\ \displaystyle
\frac{\tau^2+2\tau-1}{2}\le\rho\le\frac{1+3\tau}{2},\quad \tau\le 0,
\end{array}
$$
whereas for extreme value copulas we have $\rho,\tau\ge 0$ and the Hutchinson--Lai
inequality
$$
\sqrt{1+3\tau}-1\le\rho\le\min\{(3/2)\tau,2\tau-\tau^2\}
$$
holds true, which for long stood as a conjecture and has been proved in \cite{Hurl}.

Modern refinements of these results can be found in recent papers \cite{Trut2018, Trut2, Trut1}.

So, in \cite{Trut2018} a new sharp inequality for bivariate extreme value copulas is derived:
$$\rho\ge\frac{3\tau}{2+\tau}.$$

The coefficient $\lambda$ was historically paid less attention. In \cite[Section 3.2]{Esh} there
were found domains of possible values of $\lambda$ and $\tau$ for some families of
extreme value copulas ($t$-EV, BB5, Tawn, Joe). Below we find tight bounds on $\rho$ and $\tau$ for a known
$\lambda$ on the whole class of extreme value copulas, but first we pointwise
estimate the copulas themselves.

\section{Main Results and Discussion}

\begin{theorem}
For any extreme value copula with a known\/ $\lambda\in [0,1]$ we have
\begin{equation}\label{omin}
C(u,v)\ge\min\{u^{1-\lambda}v,uv^{1-\lambda}\}
\end{equation}
and there exist\/ $a,b\ge 0$\textup, $a+b=\lambda$\textup, such that
\begin{equation}\label{omax}
C(u,v)\le\min\{u,v,u^{1-a}v^{1-b}\};
\end{equation}
moreover\textup, these bounds are extreme value copulas with the same\/ $\lambda$.
\end{theorem}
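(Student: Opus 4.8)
The plan is to work entirely in terms of Pickands' dependence function $A$ via the representation \eqref{pik}, exploiting that for fixed arguments the copula value is monotone in $A$. Writing $x=-\ln u\ge 0$, $y=-\ln v\ge 0$, $s=x+y$ and $t=y/s$, one has $C(u,v)=\exp\{-sA(t)\}$ with $s\ge 0$; hence, pointwise, the larger $A$ is the smaller $C$ is, and conversely. Since $\max\{t,1-t\}\le A\le 1$ forces $A(0)=A(1)=1$, and since \eqref{vyr} gives $A(1/2)=1-\lambda/2$, every admissible $A$ passes through the three points $(0,1)$, $(1/2,1-\lambda/2)$, $(1,1)$. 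The whole argument then reduces to sandwiching a convex function through these three points between two piecewise linear convex dependence functions.

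First I would prove the lower bound \eqref{omin}. By convexity, on $[0,1/2]$ the graph of $A$ lies below the chord joining $(0,1)$ and $(1/2,1-\lambda/2)$, i.e.\ $A(t)\le 1-\lambda t$, and symmetrically $A(t)\le 1-\lambda(1-t)$ on $[1/2,1]$. Thus $A(t)\le 1-\lambda\min\{t,1-t\}$, which by \eqref{mo2} is precisely the dependence function of the Marshall--Olkin copula with $\alpha=\beta=\lambda$. Since larger $A$ means smaller $C$, this inequality yields $C(u,v)\ge\min\{u^{1-\lambda}v,uv^{1-\lambda}\}$, establishing \eqref{omin}; by \eqref{mo3} this bound is an extreme value copula with tail coefficient $\min\{\lambda,\lambda\}=\lambda$.

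For the upper bound \eqref{omax} the idea is dual: I would bound $A$ from below by a supporting line at $t=1/2$. As $A$ is convex, at the interior point $1/2$ it admits a subgradient $m$, giving a line $\ell(t)=1-\lambda/2+m(t-1/2)\le A(t)$ on $[0,1]$. Setting $a=\tfrac{\lambda}{2}+\tfrac{m}{2}$ and $b=\tfrac{\lambda}{2}-\tfrac{m}{2}$ one checks $a+b=\lambda$ and $\ell(t)=1-a(1-t)-bt$. A direct computation then shows that $\max\{\max\{t,1-t\},\,\ell(t)\}$ is the dependence function of $\min\{u,v,u^{1-a}v^{1-b}\}$; since both $\max\{t,1-t\}\le A$ and $\ell\le A$, this maximum is $\le A$, and by the monotonicity in $A$ this gives $C(u,v)\le\min\{u,v,u^{1-a}v^{1-b}\}$, proving \eqref{omax}. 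The resulting bound is visibly a convex dependence function valued in $[\max\{t,1-t\},1]$ with value $1-\lambda/2$ at $1/2$, hence an extreme value copula with the same $\lambda$.

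The step I expect to require the most care is verifying $a,b\ge 0$, equivalently $|m|\le\lambda$. This is exactly where convexity is used a second time: the monotonicity of difference quotients gives $A'_-(1/2)\ge\frac{A(1/2)-A(0)}{1/2}=-\lambda$ and $A'_+(1/2)\le\frac{A(1)-A(1/2)}{1/2}=\lambda$, so the entire subdifferential $[A'_-(1/2),A'_+(1/2)]$ lies in $[-\lambda,\lambda]$ and every admissible slope $m$ automatically qualifies. Beyond this, the only remaining routine work is the explicit computation identifying the two extremal copulas with the stated piecewise linear dependence functions and confirming they reproduce $\lambda$ via \eqref{vyr}; the degenerate cases $\lambda=0$ (where both bounds collapse to $uv$) and $\lambda=1$ should be checked separately but present no difficulty.
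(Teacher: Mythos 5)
Your proposal is correct and follows essentially the same route as the paper: both arguments reduce everything to the Pickands dependence function $A$, bound it above by the broken line through $(0,1)$, $(1/2,1-\lambda/2)$, $(1,1)$ (the Marshall--Olkin dependence function with $\alpha=\beta=\lambda$) and below by a supporting line at $t=1/2$ truncated with $\max\{t,1-t\}$. Your subgradient/difference-quotient justification of $|m|\le\lambda$ is just a more analytic phrasing of the paper's geometric observation that the tangent at $L$ cannot pass above $K$ and $M$.
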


\begin{proof}
By \eqref{pik}, the extreme value copula $C$ decreases monotonically with $A$, so it
suffices to estimate the function $A$ from above and below. Consider the plot given
in Fig.~1.

\begin{figure}[htb]
%\vspace{7cm}
\centering
\includegraphics[scale=0.8]{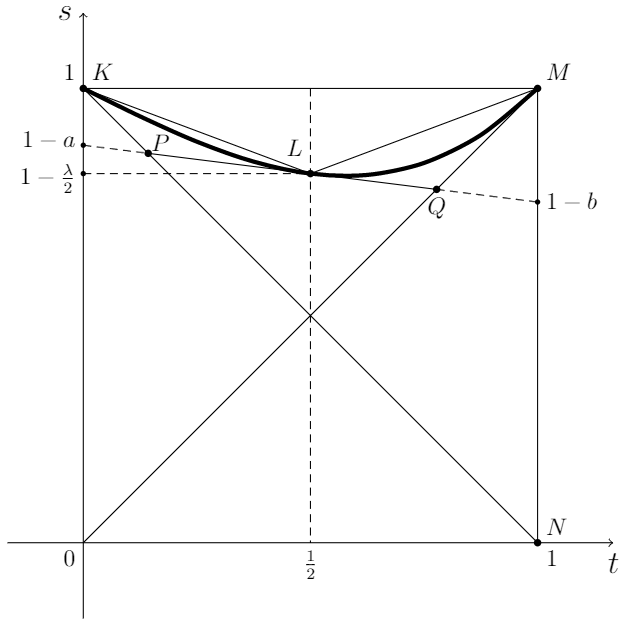}
\caption{Plot of the function $A$ and its geometric analysis.}
%\label{Fig}
\end{figure}

The graph of $A$ (bold line) passes through the points $K(0;1)$ and $M(1;1)$, and
\eqref{vyr} implies that it also passes through $L(1/2;1-\lambda/2)$. By the
convexity of the curve, no point of the graph lies above the broken line $KLM$, and
the equation of the latter coincides with formula \eqref{mo2} for the Marshall--Olkin
copula with $\alpha=\beta=\lambda$. This yields \eqref{omin}.

Next, by the convexity, the graph of $A$ has at least one tangent line at $L$ and
does not lie below it at any point. This tangent can be parametrized by the equation
\begin{equation}\label{kas}
s=(1-a)(1-t)+(1-b)t.
\end{equation}
By the convexity of $A$, on the segment $[0,1]$ this tangent cannot pass above the
points $K$ and $L$, and therefore $a,b\ge 0$. Plugging the coordinates of $L$ into
\eqref{kas}, we obtain $a+b=\lambda$.

Denote the intersection points of the straight line \eqref{kas} with $KN$ and $OM$ by
$P$ and~$Q$ respectively. The equation of the broken line $KPQM$, lower bounding the
graph of $A$, is of the form
$$
s=\max\{1-t,t,(1-a)(1-t)+(1-b)t\},
$$
which, taking into account \eqref{pik}, implies \eqref{omax}.
\end{proof}

Let us compare the situation with that studied in \cite{Bounds}, see also
\cite[p.~184, Theorem~5.1.16]{Nel}, where upper and lower pointwise bounds for
copulas with known values of $\rho$ and~$\tau$ were found (without restrictions on a
class of copulas). The obtained bounds are found to be copulas but do not have
desired values of the coefficients. In our case the lower pointwise bound belongs to
the class of extreme value copulas with a given $\lambda$, but the upper one does
not. Nevertheless, instead of the latter there exists a family of copulas
\begin{equation}\label{sem}
C(u,v)=\min\{u,v,u^{1-a}v^{1-b}\},\quad a+b=\lambda,\quad a,b\ge 0,
\end{equation}
forming a Pareto bound. This copulas are unimprovable in the sense that none of them
can be increased at some point without decreasing at another one. This approach could
also be useful in other cases where searching for pointwise bounds leads to estimates
in the form of quasi-copulas which have no probabilistic sense \cite{Flor}.

\begin{lemma}
For copulas\/ \eqref{sem} we have
\begin{equation}\label{semrho}
\rho=1-\frac{16(1-\lambda)^2}{(4-\lambda)^2-9(a-b)^2}
\end{equation}
and
\begin{equation}\label{semtau}
\tau=\lambda.
\end{equation}
\end{lemma}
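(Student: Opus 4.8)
The plan is to reduce both computations to the dependence function $A$ of the family \eqref{sem} and then apply the integral representations \eqref{vyr}. First I would identify $A$: substituting $u=e^{-(1-t)}$, $v=e^{-t}$ into \eqref{sem} and using \eqref{pik}, the three arguments of the minimum become three linear functions, and since $x\mapsto e^{-x}$ reverses order,
$$A(t)=\max\{1-t,\;t,\;(1-a)(1-t)+(1-b)t\}.$$
This is exactly the broken line $KPQM$ from the proof of the Theorem: it equals $1-t$ on $[0,t_1]$, the linear piece $\ell(t)=(1-a)+(a-b)t$ on $[t_1,t_2]$, and $t$ on $[t_2,1]$, with breakpoints $t_1=a/(1+a-b)$ and $t_2=(1-a)/(1-a+b)$ found by intersecting $\ell$ with the lines $s=1-t$ and $s=t$. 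I record the key facts $A(t_1)=1-t_1$ and $A(t_2)=t_2$.

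For $\tau$ I would use the middle formula in \eqref{vyr}. Since $A$ is piecewise linear and convex, $A'$ is a step function taking values $-1$, $a-b$, $1$, so $dA'$ is a discrete measure with two atoms: mass $1+a-b$ at $t_1$ and mass $1-a+b$ at $t_2$. The integral then collapses to
$$\tau=\frac{t_1(1-t_1)}{A(t_1)}(1+a-b)+\frac{t_2(1-t_2)}{A(t_2)}(1-a+b).$$
Substituting $A(t_1)=1-t_1$ and $A(t_2)=t_2$ cancels the denominators, leaving $t_1(1+a-b)+(1-t_2)(1-a+b)$, which by the explicit values of $t_1,t_2$ equals $a+b=\lambda$, proving \eqref{semtau}.

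For $\rho$ I would split $\int_0^1 dt/(A(t)+1)^2$ over the three pieces; each integrand is $1/(\text{linear})^2$ and integrates elementarily. The outer pieces give $\frac1{2-t_1}-\frac12$ and $\frac1{1+t_2}-\frac12$, while the middle piece, using $\ell(t_1)+1=2-t_1$ and $\ell(t_2)+1=1+t_2$, contributes $\frac{1}{a-b}\bigl(\frac1{2-t_1}-\frac1{1+t_2}\bigr)$. Writing $\delta=a-b$ and $m=4-\lambda$, everything is expressed through $p=1/(2-t_1)$ and $q=1/(1+t_2)$, which work out to $p=2(1+\delta)/(m+3\delta)$ and $q=2(1-\delta)/(m-3\delta)$; summing yields the total integral $(4-\lambda^2-3\delta^2)/\bigl((4-\lambda)^2-9\delta^2\bigr)$, whence $\rho=12(\cdot)-3$ rearranges to \eqref{semrho}.

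The main obstacle is the algebraic simplification of $\rho$: the combined numerator and denominator look unwieldy until one notices the collapse $-m^2+8m-12=4-\lambda^2$ for $m=4-\lambda$, which is what produces the compact closed form. I would also treat the degenerate case $a=b$ separately, where $\ell$ is constant and the factor $1/(a-b)$ is formally $0/0$: there the middle piece integrates directly as a constant, and since the final expression \eqref{semrho} is continuous in $\delta$, this case agrees by continuity.
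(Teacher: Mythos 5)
Your proposal is correct and follows essentially the same route as the paper: identify the piecewise linear dependence function $A(t)=\max\{1-t,\,t,\,(1-a)(1-t)+(1-b)t\}$ with the same breakpoints (your $t_1,t_2$ are the paper's $t_P,t_Q$), compute $\tau$ from the two atoms of $dA'$, and compute $\rho$ by splitting the integral $\int_0^1 dt/(A(t)+1)^2$ over the three linear pieces; your algebra checks out against the paper's closed forms. The only additions are cosmetic (the substitution $m=4-\lambda$) and your explicit treatment of the degenerate case $a=b$ by continuity, which the paper leaves implicit.
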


\begin{proof}
Denote for brevity $\nu=a-b$; then
$$
a=\frac{\lambda+\nu}{2},\qquad b=\frac{\lambda-\nu}{2}.
$$

Again consider the graph plotted in Fig.~1. From the intersection of lines $s=1-t$
and $s=t$ with $s=(1-a)(1-t)+(1-b)t$ we find abscissae of the points $P$ and $Q$:
$$
t_P=\frac{a}{1+\nu},\qquad t_Q=\frac{1-a}{1-\nu}.
$$

We have
\begin{equation}
A(t)=
\begin{cases}
1-t, & 0\le t<t_P,\\ (1-a)(1-t)+(1-b)t, & t_P\le
t<t_Q,\\ t, & t_Q\le t\le 1.
\end{cases}
\end{equation}

Compute the integral
$$
\begin{aligned}
I&=\int_0^1\frac{dt}{(A(t)+1)^2}\\ &=\int_0^{t_P}\frac{dt}{(2-t)^2}
+\int_{t_P}^{t_Q}\frac{dt}{((1-a)(1-t)+(1-b)t+1)^2}+\int_{t_Q}^1\frac{dt}{(t+1)^2}\\
&=\frac{1+ab-a^2-b^2}{(2+a-2b)(2+b-2a)}
=\frac{1}{3}\left(1-\frac{4(1-\lambda)^2}{(4-\lambda)^2-9\nu^2}\right);
\end{aligned}
$$
this, together with $\rho=12I-3$, yields \eqref{semrho}.

To compute $\tau$, note that the derivative $A'(t)$ is zero everywhere except for the
points~$t_P$ and~$t_Q$, where it has jumps from $-1$ to $\nu$ and from $\nu$ to 1
respectively. We get
$$
\tau=\int_0^1\frac{t(1-t)dA'(t)}{A(t)}=\frac{t_P(1-t_P)(1+\nu)}{1-t_P}
+\frac{t_Q(1-t_Q)(1-\nu)}{t_Q}=a+b=\lambda.
$$
\end{proof}

\begin{theorem}
For any extreme value copula with a known\/ $\lambda\in [0,1]$\textup, we have
\begin{equation}\label{orho}
\frac{3\lambda}{4-\lambda}\le\rho\le 1-16\left(\frac{1-\lambda}{4-\lambda}\right)^2
\end{equation}
and
\begin{equation}\label{otau}
\frac{\lambda}{2-\lambda}\le\tau\le\lambda,
\end{equation}
where the lower bounds are attained at Marshall--Olkin copulas with\/
$\alpha=\beta=\lambda$\textup, and the upper ones\textup, at copulas of the family\/
\eqref{sem} with $a=b=\lambda/2$.
\end{theorem}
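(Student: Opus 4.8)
The plan is to leverage the pointwise copula bounds from Theorem~1 together with the monotonicity of the dependence coefficients $\rho$ and $\tau$ under pointwise ordering of copulas. Recall from \eqref{vyr} that $\rho=12\int_0^1\int_0^1 C(u,v)\,du\,dv-3$ is manifestly monotone increasing in $C$, so integrating the two-sided bound $\min\{u^{1-\lambda}v,uv^{1-\lambda}\}\le C(u,v)\le\min\{u,v,u^{1-a}v^{1-b}\}$ immediately transfers to a two-sided bound on $\rho$. The lower pointwise bound is itself a Marshall--Olkin copula with $\alpha=\beta=\lambda$, whose $\rho$ is computable directly from \eqref{mo3}, giving $\rho=3\lambda^2/(4\lambda-2\lambda^2)=3\lambda/(4-2\lambda)$; I must double-check this simplifies to the claimed $3\lambda/(4-\lambda)$, or else trace whether the correct lower extremal object is the MO copula or the inner bound. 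The upper pointwise bound belongs to the family \eqref{sem}, whose $\rho$ was just computed in the Lemma as $\rho=1-16(1-\lambda)^2/\big((4-\lambda)^2-9(a-b)^2\big)$; this is maximized over the admissible parameters when the subtracted term is smallest, i.e.\ when $(a-b)^2$ is as large as possible, but the claimed maximizer is $a=b=\lambda/2$ giving $(a-b)^2=0$, so I must verify the direction of optimization carefully.

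For the $\tau$ bounds I would proceed similarly but note a subtlety: $\tau$ is \emph{not} a simple linear integral of $C$, so pointwise monotonicity does not transfer as cleanly. The upper bound $\tau\le\lambda$ is the easy half, since the Lemma established that every member of the family \eqref{sem} has $\tau=\lambda$ exactly, and I would argue that the extreme value copula inequality $\tau\le\lambda$ follows from a general comparison (indeed, for extreme value copulas one always has $\tau\le\lambda$, reflecting that piecewise-linear dependence functions maximize concentration for fixed $A(1/2)$). For the lower bound $\tau\ge\lambda/(2-\lambda)$, the natural candidate extremizer is again the Marshall--Olkin copula with $\alpha=\beta=\lambda$, for which \eqref{mo3} gives $\tau=\lambda^2/(2\lambda-\lambda^2)=\lambda/(2-\lambda)$, matching the claim exactly. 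So the task reduces to proving that among extreme value copulas with fixed $\lambda$, the Marshall--Olkin copula minimizes $\tau$.

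The main obstacle will be establishing the two \emph{extremal} claims for $\tau$ rigorously, because $\tau=\int_0^1 t(1-t)\,dA'(t)/A(t)$ is a nonlinear functional of the dependence function $A$, so I cannot simply invoke pointwise monotonicity. The plan here is variational: fix $A(1/2)=1-\lambda/2$ and study $\tau$ as a functional over the convex cone of admissible dependence functions (convex, with $\max\{t,1-t\}\le A(t)\le 1$ and the pinned midpoint value). I would show that the symmetric Marshall--Olkin function $A(t)=1-\lambda\min\{t,1-t\}$ minimizes this functional and the symmetric piecewise-linear function from \eqref{sem} with $a=b=\lambda/2$ maximizes it. A clean route is to exploit the known Hutchinson--Lai inequality $\sqrt{1+3\tau}-1\le\rho\le\min\{(3/2)\tau,2\tau-\tau^2\}$ and the inequalities just derived for $\rho$, combining them to pin down the $\tau$-range; alternatively, integration by parts on the $\tau$-integral recasts $\tau$ as $\int_0^1\big((2t-1)/A(t)\big)\,dt$ plus boundary terms (using $A'$ as the integrator), after which the monotonicity in $A$ can be read off pointwise. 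I expect verifying that these extremizers are genuinely attained \emph{within} the class—and that no asymmetric or curved dependence function does better—to be the delicate step, most naturally handled by the convexity/pinned-midpoint geometry already drawn in Fig.~1, since that figure shows every admissible $A$ is squeezed between the two extremal broken lines.
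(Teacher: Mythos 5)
Your treatment of $\rho$ is essentially the paper's argument, and the two points you flagged for verification do resolve in favor of the claim: for the Marshall--Olkin copula with $\alpha=\beta=\lambda$, formula \eqref{mo3} gives $\rho=3\lambda^2/(2\lambda-\lambda^2+2\lambda)=3\lambda^2/(4\lambda-\lambda^2)=3\lambda/(4-\lambda)$ (your denominator $4\lambda-2\lambda^2$ is a slip), and in \eqref{semrho} increasing $(a-b)^2$ shrinks the denominator $(4-\lambda)^2-9(a-b)^2$, hence enlarges the subtracted fraction and decreases $\rho$, so the maximum over the family is indeed at $a=b=\lambda/2$. Combined with the fact that Theorem~1 supplies, for the given copula, \emph{some} admissible pair $(a,b)$ with $C\le\min\{u,v,u^{1-a}v^{1-b}\}$, this chain $\rho(C)\le\rho_{a,b}\le\rho_{\lambda/2,\lambda/2}$ closes the upper bound for $\rho$.

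The genuine gap is in the $\tau$ half. The single missing idea is that $\tau$, like $\rho$, is a \emph{measure of concordance} and is therefore monotonically nondecreasing under the pointwise ordering of copulas (\cite[p.~169, Theorem~5.1.9]{Nel}); this is exactly what the paper invokes, and it makes the $\tau$ bounds follow by the identical one-line argument used for $\rho$: the lower pointwise bound is the MO copula with $\tau=\lambda/(2-\lambda)$ by \eqref{mo3}, and the upper pointwise bound lies in the family \eqref{sem}, every member of which has $\tau=\lambda$ by the Lemma. None of your three proposed substitutes delivers the result. Combining Hutchinson--Lai with the $\rho$ bounds is strictly lossy: $\rho\le(3/2)\tau$ together with $\rho\ge 3\lambda/(4-\lambda)$ gives only $\tau\ge 2\lambda/(4-\lambda)<\lambda/(2-\lambda)$, and the upper direction is likewise weaker than $\tau\le\lambda$ (e.g.\ at $\lambda=1/2$ it yields $\tau\lesssim 0.60$). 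The integration by parts is miscomputed: differentiating $t(1-t)/A(t)$ produces $\bigl((1-2t)A(t)-t(1-t)A'(t)\bigr)/A(t)^2$, so $A'$ does not disappear and no pointwise monotonicity in $A$ can be read off; note also that pointwise domination of $A$ goes in the \emph{opposite} direction to domination of $C$, so even a formula monotone in $A$ would need care. The variational argument over the pinned convex cone could in principle work but is not carried out, and your parenthetical justification of $\tau\le\lambda$ (``piecewise-linear dependence functions maximize concentration'') simply restates the conclusion. As written, the $\tau$ inequalities are asserted rather than proved.
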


\begin{proof}
First note that $\rho$ and $\tau$ are measures of concordance, which are
monotonically nondecreasing with $C$ \cite[p.~169, Theorem~5.1.9]{Nel} (though this
is not evident from formulas for $\tau$, in contrast to $\rho$).

Thus, the lower bound for $C$ in Theorem~1 yields lower bounds for $\rho$ and $\tau$
in the particular case of a Marshall--Olkin copula with $\alpha=\beta=\lambda$
according to \eqref{mo3}.

The upper bound for $C$ in Theorem~1, taking into account Lemma~1, immediately gives
an upper bound for $\tau$. One can also observe that, according to \eqref{semrho},
the coefficient~$\rho$ decreases in $|a-b|$ and attains its maximum value when
$a=b=\lambda/2$. Hence we get the upper bounds.
\end{proof}

Bounds of Theorem~2 are presented as solid lines in Figs.~2 and~3.

\begin{figure}[htbp]
%\vspace{7cm}
\centering
\includegraphics[scale=0.5]{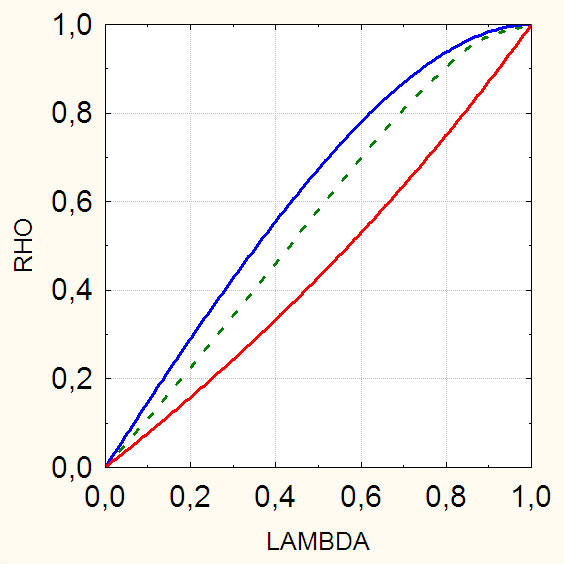}
\caption{Bounds on $\rho$ for a known $\lambda$.}
%\label{Fig}
\end{figure}

\begin{figure}[htbp]
%\vspace{7cm}
\centering
\includegraphics[scale=0.5]{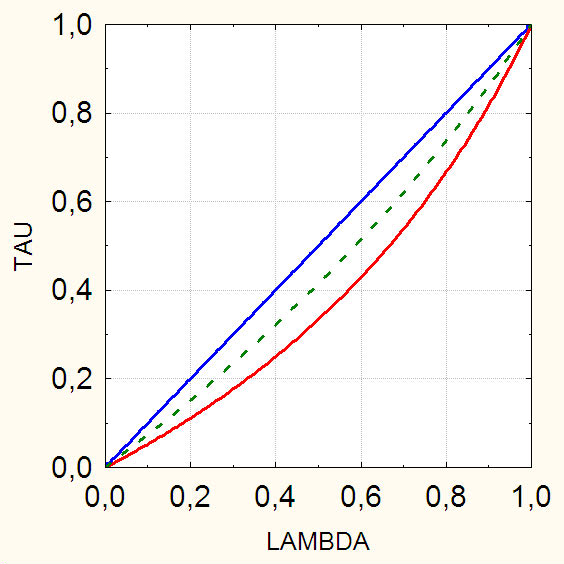}
\caption{Bounds on $\tau$ for a known $\lambda$.}
%\label{Fig}
\end{figure}

To make the picture complete, by dashed lines we plot the coefficients in the case of
the popular Gumbel copula \eqref{gum}, for which \eqref{gumpar} gives
$$
\tau=1-\log_2(2-\lambda),
$$
and values of $\rho$ were numerically evaluated by the author and are presented in
the following table:

$$
\begin{tabular}{|c|c|c|}
\hline $\lambda$ & $\theta$ & $\rho$\\ \hline 0 & 1 & 0\\ 0.1 & 1.080 & 0.110\\ 0.2 &
1.179 & 0.225\\ 0.3 & 1.306 & 0.342\\ 0.4 & 1.475 & 0.461\\ 0.5 & 1.710 & 0.581\\ 0.6
& 2.060 & 0.699\\ 0.7 & 2.641 & 0.808\\ 0.8 & 3.802 & 0.904\\ 0.9 & 7.273 & 0.973\\ 1
& $\infty$ & 1\\ \hline
\end{tabular}
$$

\medskip
Let us also mention Blomqvist's coefficient, which can be defined as
$$
\beta_{X,Y}={\bf E}{\rm\ sign}(X-X_m)(Y-Y_m)
$$
where $X_m$ and $Y_m$ are medians of $X$ and $Y$ respectively, and is expressed
through a copula as
$$
\beta_C=4C(1/2,1/2)-1.
$$
By \eqref{pik} and \eqref{vyr}, for extreme value copulas this coefficient is
uniquely related with the upper tail dependence coefficient:
$$
\beta_C=2^\lambda-1,\quad \lambda=\log_2(1+\beta_C).
$$
Thus, the results of Theorem~2 can easily be recalculated to the case of a known
Blomqvist's coefficient instead of the upper tail dependence coefficient.

Main results were briefly presented in \cite{Leb}.

%\newpage

%=================╤яшёюъ ышЄхЁрЄєЁ√====================
%\end{fulltext}


\begin{thebibliography}{10}

\bibitem{Esh}
Eschenburg, P. 2013. {\it Properties of Extreme-Value Copulas.} Technische Universit\"at
M\"unchen. Fakult\"at f\"ur Mathematik. M\"unchen.\\
E-print: https://mediatum.ub.tum.de/doc/1145695/1145695.pdf

\bibitem{Hurl}
H\"urlimann, W. 2003. Hutchinson--Lai's Conjecture for Bivariate Extreme Value Copulas.
{\it Statist. Probab. Lett.} 61(2): 191--198.

\bibitem{Leb}
Lebedev, A.\,V. 2017. On the Interrelation between some Dependence Coefficients of Bivariate Extreme Value Copulas.
{\it Proceedings of the III International scientific and practical conference "Modern problems of physical and mathematical sciences". 23--26 November 2017. Orel, Russia.} 151--154. (in Russian)\\
E-print: https://phys-math.ru/{\_}media/conf2017/spfmn-2017-sbornik.pdf

\bibitem{QRM}
McNeil, A.\,J., R.~Frey and P.~Embrechts. 2005. {\it Quantitative Risk Management.} Princeton, NJ.
Princeton University Press.

\bibitem{Trut2018}
Mroz, T. and W. Trutschnig. 2018. A sharp inequality for Kendall's $\tau$ and Spearman's $\rho$ of
Extreme-Value Copulas. {\it arXiv: 1811.02256v1 [math.ST] 6 Nov 2018.}

\bibitem{Nel}
Nelsen, R. 2006. {\it An Introduction to Copulas.} Springer. New York.

\bibitem{Bounds}
Nelsen, R.\,B., J.\,J.~Quesada Molina, J.\,A.~Rodr\'{\i}guez-Lallena and
M.~\'Ubeda-Flores. 2001. Bounds on Bivariate Distribution Functions with Given Margins and
Measures of Association. {\it Commun. Statist. Theory Methods.} 30(6): 1055--1062.

\bibitem{Flor}
Rodr\'{\i}guez-Lallena, J.\,A. and M. \'Ubeda-Flores. 2005. Best-possible Bounds on Sets of
Multivariate Distribution Functions. {\it Commun. Statist. Theory Methods.} 33(4): 805--820.

\bibitem{Trut2}
Schreyer, M., R.~Paulin and W.~Trutschnig. 2017. On the Exact Region Determined by
Kendall's~$\tau$ and Spearman's $\rho$. {\it J. R. Statist. Soc. B.} 79(2): 613--633.

\bibitem{Trut1}
Trutschnig, W., M.~Schreyer and J.~Fern\'andez-S\'anchez. 2016. Mass Distributions of
Two-Dimensional Extreme-Value Copulas and Related Results. {\it Extremes.} 19(3): 405--427.

\end{thebibliography}
\end{document}